\newcommand{\n}[1]{\boldsymbol{#1}}
\newcommand{\cI}{\mathcal{I}}
\newcommand{\cA}{\mathcal{A}}
\newcommand{\cO}{\mathcal{O}}
\newcommand{\Idn}{\mathcal{I}_{\mathrm{d}}^{n}}
\newtheorem{example}{Example}[section]
\def\BState{\State\hskip-\ALG@thistlm}
\begin{document}
\title{Computing the B\'ezier Control Points of the Lagrangian Interpolant in Arbitrary Dimension}

\author{Mark Ainsworth \footnotemark[2] and Manuel A. S\'anchez \footnotemark[2]}

\maketitle
\begin{abstract}
The Bernstein-B\'ezier form of a polynomial is widely used in the fields of
computer aided geometric design, spline approximation theory and, more
recently, for high order finite element methods for the solution of partial
differential equations. However, if one wishes to compute the classical
Lagrange interpolant relative to the Bernstein basis, then the resulting
Bernstein-Vandermonde matrix is found to be highly ill-conditioned.

In the univariate case of degree $n$, Marco and Martinez~\cite{MR2305145}
showed that using Neville elimination to solve the system exploits the total
positivity of the Bernstein basis and results in an $\mathcal{O}(n^2)$
complexity algorithm. Remarkable as it may be, the Marco-Martinez algorithm has
some drawbacks: The derivation of the algorithm is quite technical; the
interplay between the ideas of total positivity and Neville elimination are not
part of the standard armoury of many non-specialists; and, the algorithm is
strongly associated to the univariate case.

The present work addresses these issues. An alternative algorithm for the
inversion of the univariate Bernstein-Vandermonde system is presented that has:
The same complexity as the Marco-Martinez algorithm and whose stability does
not seem to be in any way inferior; a simple derivation using only the basic
theory of Lagrange interpolation (at least in the univariate case); and, a
natural generalisation to the multivariate case.
\end{abstract}
\begin{keywords}
Bernstein polynomials, Bernstein-Vandermonde matrix, total positivity, multivariate
polynomial interpolation.
\end{keywords}
\begin{AMS}
65D05,\,65D17,\,65Y20,\,68U07
\end{AMS}
\date{}

\renewcommand{\thefootnote}{\fnsymbol{footnote}}
\footnotetext[2]{Division of Applied Mathematics, Brown University, Providence,
RI 02912, USA (mark\_ainsworth@brown.edu, manuel\_sanchez\_uribe@brown.edu).
MA gratefully acknowledges the partial support of this work under AFOSR
contract FA9550-12-1-0399. \hfill}

\section{Introduction}

The classical Lagrangian interpolation problem consists of finding a polynomial
$p$ of degree at most $n$, such that
\begin{equation}\label{polyfit}
p(x_j) =  f_j, \quad j=0,...,n.
\end{equation}
where $\{x_j\}_{j=0}^n \subseteq [0,1]$ are distinct $(n+1)$ nodes and
$\{f_j\}_{j=0}^n$ are given data. The existence of a unique solution to this
problem is well-known from one's first course in numerical analysis. The
seasoned reader might also recall that, despite the uniqueness of the
interpolant itself, the choice of basis for constructing the Lagrange
interpolant is less clear-cut~\cite{MR2115059}--possibilities include the
Lagrangian or the Newton bases, with the much maligned monomial basis even
having a shout~\cite{MR1927606}.

The Bernstein basis functions for the space $\mathbb{P}^n([0,1])$ of polynomials of
degree at most $n$ on $[0,1]$, is given by
\begin{equation}
B_{k}^{n}(x) =  \binom{n}{k} (1-x)^{n-k} x^{k},\quad k=0,...,n,\quad x\in[0,1].
\end{equation}
The Bernstein functions extend naturally to give a basis for polynomials of
total degree at most $n$ on simplices in arbitrary numbers of spatial
dimensions (see Section \ref{section simplex}).

The basis has many unexpected and attractive properties that have led to it
being: almost ubiquitous in the computer aided geometric design (CAGD)
community~\cite{Farin:2001:CSC:501891,MR2921860} for the representation of
curves and surfaces; as an important theoretical tool in the spline
approximation theory literature~\cite{MR2355272}; and, more recently, a
practical tool for the efficient implementation of high order finite element
methods for the solution of partial differential
equations~\cite{MR3296172,MR3183046,MR2862005,MR2917163,MR2776913}. For further
information on these, and numerous other applications of Bernstein polynomials,
we refer the reader to the survey article of Farouki~\cite{MR2921860}. 

The Bernstein-B\'ezier (BB) representation of a polynomial
$p\in\mathbb{P}^n([0,1])$ takes the form $p=\sum_{k=0}^n c_k B_{k}^{n}$ in
which the coefficients $\{c_k\}$ are referred to as \emph{control
points}~\cite{Farin:2001:CSC:501891,MR2921860,MR2355272}, and which may be
associated with the $(n+1)$ uniformly spaced points on $[0,1]$ (even if the
interpolation nodes are non-uniformly spaced). However, while the polynomial
$p$ satisfies $p(0)=c_0$ and $p(1)=c_n$, this property does not hold at the
remaining control points. One one hand, this property does not hinder the
typical workflow of a CAGD engineer whereby the locations of $\{c_k\}$ are
adjusted until a curve of the desired shape is obtained. In effect, the control
points are used to \emph{define} the curve directly.  On the other hand, the
typical usage of polynomials in scientific computing is rather different in
that one generally wishes to \emph{fit} a polynomial to a given function.  For
example, in applying the finite element method to approximate the solution of a
partial differential equation, one might have boundary or initial data and
require to choose an appropriate (piecewise) polynomial approximation of the
data. The approximation is often chosen to be an interpolant, leading to what
we shall term the Bernstein-B\'ezier interpolation problem, which consists of
computing the control points $\{c_k\}_{k=0}^{n}$ such that the associated
Bernstein-B\'ezier polynomial interpolates the data:
\begin{equation}\label{Bernsteinrepresentation}
    p(x_j) = \sum_{k=0}^n c_k B_k^{n}(x_j) = f_j, \quad j=0,\ldots,n .
\end{equation}

Conditions~\eqref{Bernsteinrepresentation} may be expressed as a system of
linear equations involving the \emph{Bernstein-Vandermonde}
matrix~\cite{MR2305145}. If the monomial basis were to be used, then the
standard Vandermonde matrix would emerge. The highly ill-conditioned nature of
the Vandermonde matrix is well-documented~\cite{MR0380189}. Notwithstanding, the inversion of the Vandermonde matrix to compute the
Lagrangian interpolant is in some ways preferable to more direct
methods~\cite{MR892779,MR1927606}.

The Bernstein-Vandermonde matrix is likewise found to be highly
ill-conditioned~\cite{MR2305145} suggesting that its inversion may not be the
wisest approach. Nevertheless, \emph{structure} of the matrix arising from the
\emph{total positivity} of the Bernstein basis means that using Neville
elimination~\cite{MR1149743} to solve the system obviates some of the issues
due to ill-conditioning. Marco and Martinez~\cite{MR2305145} exploit this fact
and derive an algorithm for the inversion of the matrix that has
$\mathcal{O}(n^2)$ complexity---the same as multiplying by the inverse of the
matrix.

Remarkable though the Marco-Martinez algorithm may be, it does have its drawbacks:
\begin{itemize}
\item the derivation of the algorithm is highly technical involving non-trivial
	identities for the minors of Bernstein-Vandermonde matrices;
\item the interplay between the ideas of total positivity and Neville
	elimination are not part of the standard armoury of many
	non-specialists;
\item the algorithm seems to be firmly rooted to the solution of the Bernstein
	interpolation problem in the univariate case (indeed, total positivity
	is essentially a univariate concept).
\end{itemize}
The purpose of the present work is to address these issues. Specifically, we
shall present an alternative algorithm for the inversion of the univariate
Bernstein-Vandermonde system that has:
\begin{itemize}
\item the same complexity as the Marco-Martinez algorithm and whose stability
	does not seem to be in any way inferior;
\item a simple derivation that could be taught to undergraduates familiar with
	only the basic theory of Lagrange interpolation (at least in the
	univariate case);
\item a natural generalisation to the multivariate case (essential for
	applications such as finite element analysis in two and three
	dimensions).
\end{itemize}
The remainder of this article is organised as follows. The next section deals
with deriving the new algorithm in the univariate case using only elementary
facts about univariate interpolation and basic properties of Bernstein
polynomials.  Section 3 shows how the univariate algorithm is easily extended
to solve the multivariate Bernstein interpolation problem in the case of tensor
product polynomials.

In Section 4, we tackle the much harder task of solving the Bernstein
interpolation problem on simplices in higher dimensions in which even the
existence of the Lagrangian interpolant is less
obvious~\cite{MR2200737,MR892779}. Using the standard hypothesis under which
the Lagrange interpolant exists, we develop an algorithm for the solution of
the multivariate Bernstein interpolation problem on simplices in two
dimensions, and indicate how it may be extended to arbitrary dimensions. The
ideas used in Section 4 are, modulo technical issues, essentially the same to
those used in Section 2 to handle the univariate case. Sections 2-4 are accompanied by some numerical examples illustrating the behaviour and stability of the resulting
algorithms.

In summary, the algorithms developed in the present work form a key step
towards a more widespread use of Bernstein-B\'ezier techniques in scientific
computing in general, and in finite element analysis in particular by enabling
the use of non-homogeneous initial and boundary data. More generally, the
problem of how to extend the Marco-Martinez algorithm to the solution of
Bernstein-Vandermonde systems to arbitrary dimension is addressed.

\section{Bernstein polynomial interpolation in 1D}
\subsection{Analytical solution}
The \emph{improved Lagrange formula}~\cite{MR2115059} for the Lagrangian interpolant is given by
\begin{equation}\label{improvedLagrange}
p(x)=\sum_{j=0}^{n} \mu_j f_j \frac{\ell(x)}{x-x_{j}}, \quad x\in [0,1].
\end{equation}
where $\ell\in\mathbb{P}^{n+1}([0,1])$ and the barycentric weights $\mu_j$,
$j=0,...,n$ are given by
\begin{equation}
\ell(x)= \prod_{k=0}^n(x-x_k),\qquad\mu_{j} =   \frac{1}{\ell'(x_j)} \mathrm{ for
} j=0,...,n.
\end{equation}
Our first result gives a closed form for the control points satisfying
\eqref{Bernsteinrepresentation}:
\begin{theorem}\label{theoremLagrange}
For $k=0,...,n$ define the control points $\{c_k\}_{k=0}^{n}$ by the rule
\begin{equation}
c_{k}=\sum_{j=0}^{n} \mu_{j} f_{j} \tilde{w}_{k}(x_{j}),\quad k=0,...,n,
\end{equation}
where
\begin{equation}\label{w_k(z)}
  \tilde{w}_{k}(z)= \frac{w_{k}(z)}{z(1-z)B_{k}^{n}(z)}
  \mbox{ and } w_{k}(z)= -\sum_{i=0}^{k} a_{i} B_{i}^{n+1}(z)
\end{equation}
for $z$ any node $x_j$, with $\{a_i\}$ control points of $\ell$. Then, $\{c_k\}_{k=0}^{n}$ are the control points for
the Bernstein interpolant~\eqref{Bernsteinrepresentation}.
\end{theorem}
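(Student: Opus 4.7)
The plan is to start from the improved Lagrange formula and re-express each term $\ell(x)/(x-x_j)$ in the Bernstein basis of degree $n$; the claimed coefficient $c_k$ then pops out by interchanging the order of summation. Concretely, since $x-x_j$ divides $\ell$, the quotient $q_j(x) := \ell(x)/(x-x_j)$ is a polynomial of degree $n$, and has a unique Bernstein expansion $q_j(x)=\sum_{k=0}^n \beta_k^{(j)} B_k^n(x)$. Plugging this into \eqref{improvedLagrange} gives
\begin{equation*}
p(x)=\sum_{j=0}^n \mu_j f_j \sum_{k=0}^n \beta_k^{(j)} B_k^n(x)
=\sum_{k=0}^n\Bigl(\sum_{j=0}^n \mu_j f_j\,\beta_k^{(j)}\Bigr) B_k^n(x),
\end{equation*}
so the theorem reduces to the assertion $\beta_k^{(j)}=\tilde w_k(x_j)$.

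To identify the $\beta_k^{(j)}$, I would prove a single algebraic lemma: for any $z\in(0,1)$ and any polynomial $q=\sum_{k=0}^n \beta_k B_k^n$, the product $(x-z)q(x)$ has Bernstein coefficients $\{a_i\}_{i=0}^{n+1}$ in $\mathbb{P}^{n+1}$ satisfying
\begin{equation*}
a_i\, B_i^{n+1}(z)\;=\;z(1-z)\bigl[\beta_{i-1}B_{i-1}^n(z)-\beta_i B_i^n(z)\bigr],\qquad 0\le i\le n+1,
\end{equation*}
with the conventions $\beta_{-1}=\beta_{n+1}=0$. This follows from the elementary degree-raising identities $xB_k^n(x)=\tfrac{k+1}{n+1}B_{k+1}^{n+1}(x)$ and $(1-x)B_k^n(x)=\tfrac{n+1-k}{n+1}B_k^{n+1}(x)$: writing $(x-z)B_k^n(x)$ in the degree-$(n+1)$ basis and evaluating the resulting coefficient formula at $z$, one checks that the multipliers collapse to the stated telescoping form. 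This is the step I expect to require the most care, because the factor $z(1-z)$ only appears after a not-entirely-obvious rearrangement; finding it is the crux of the argument.

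Given the lemma, I would sum over $i=0,\ldots,k$ and observe that the right-hand side telescopes, yielding
\begin{equation*}
\sum_{i=0}^k a_i B_i^{n+1}(z)\;=\;-z(1-z)\,\beta_k\,B_k^n(z).
\end{equation*}
Applied with $q=q_j$ and $z=x_j$, so that $\{a_i\}$ are the control points of $\ell=(x-x_j)q_j(x)$ (a polynomial of degree $n+1$), this reads $w_k(x_j)=z(1-z)\beta_k^{(j)} B_k^n(x_j)$ at $z=x_j$, which by definition of $\tilde w_k$ is precisely $\beta_k^{(j)}=\tilde w_k(x_j)$.

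Finally, I would combine this with the opening display to conclude $p(x)=\sum_k c_k B_k^n(x)$ with $c_k=\sum_j \mu_j f_j\tilde w_k(x_j)$, and observe that since $p$ interpolates the data, the $c_k$ are indeed the sought control points in \eqref{Bernsteinrepresentation}. A brief remark handles the boundary-node case $x_j\in\{0,1\}$: when $0$ is a node, $\ell(0)=0$ forces $a_0=0$, so the apparent singularity in $\tilde w_k$ at $z=0$ is removable; the symmetric situation at $z=1$ is handled identically, so the formula remains valid (interpreted as a limit) for any admissible node set.
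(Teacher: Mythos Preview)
Your proposal is correct and follows essentially the same route as the paper: the paper likewise reduces to showing that $\tilde w_k(x_j)$ are the Bernstein coefficients of $\ell(x)/(x-x_j)$, derives the identical relation $a_i B_i^{n+1}(z)=z(1-z)\bigl[\beta_{i-1}B_{i-1}^n(z)-\beta_i B_i^n(z)\bigr]$ (writing $x-z=(1-z)B_1^1-zB_0^1$ and using the same product identities you cite), and telescopes. Your treatment of the boundary case $x_j\in\{0,1\}$ is a small addition not made explicit in the paper.
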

\begin{proof}
We claim that $\{\tilde{w}_{k}(x_j)\}_{k=0}^{n}$ are the control points of the
polynomial ${\ell(x)}/(x-x_j)$ for $j = 0,...,n$. Assuming the claim holds,
then
\begin{equation*}
p(x) = \sum_{k=0}^{n}c_k B_{k}^n(x)
     = \sum_{j=0}^{n} \mu_{j} f_{j} \sum_{k=0}^{n} w_{k}(x_{j})B_{k}^n(x)
     = \sum_{j=0}^{n} \mu_{j} f_{j} \frac{\ell(x)}{x-x_j},
\end{equation*}
and the improved Lagrange formula \eqref{improvedLagrange} then shows that $p$
is the interpolant.

It remains to prove the claim holds. Let $z$ be any node $x_j$,
then there exist constants $\tilde{w}_k$ such that
$
	\ell(x) = (x-z) \sum_{k=0}^{n}\tilde{w}_{k} B_{k}^{n}(x).
$
Now, using the definition of the Bernstein polynomials we obtain
$x-z = (1-z)B_{1}^{1}(x)-zB_{0}^{1}(x)$, and
\begin{equation}\label{eq:manuel}
	B_1^1 B_k^n = \frac{k+1}{n+1} B_{k+1}^{n+1}
	\mbox{ and }
	B_0^1 B_k^n = \left(1-\frac{k}{n+1}\right) B_{k}^{n+1}.
\end{equation}
Hence,
\begin{align*}
    \ell(x) =  \sum_{k=0}^{n+1}\left(
                  (1-z)\frac{k}{n+1}\tilde{w}_{k-1}
                -  z \Big(1-\frac{k}{n+1}\Big)\tilde{w}_{k}
		\right) B_{k}^{n+1}(x)
\end{align*}
where we define $\tilde{w}_{-1} = \tilde{w}_{n+1} = 0$. Hence, denoting by $\{a_i\}_{i=0}^{n+1}$ the B\'ezier control points of $\ell$
\begin{equation}\label{tildewsystem}
     a_{k} = (1-z)\frac{k}{n+1}\tilde{w}_{k-1}-z\Big(1-\frac{k}{n+1}\Big)\tilde{w}_{k},
     \quad k=0,...,n+1.
\end{equation}
Multiplying equation~\eqref{tildewsystem} by $B_{k}^{n+1}(z)$ and using the
definition of the Bernstein polynomials gives
\begin{align}\label{wtildestarsystem}
     a_{k} B_{k}^{n+1}(z)
     &= z(1-z)\Big( \tilde{w}_{k-1}B_{k-1}^{n}(z) - \tilde{w}_{k}B_{k}^{n}(z) \Big)  \\
     &= w_{k-1} - w_{k}, \quad k=0,...,n+1,
\end{align}
where $w_{k} = z(1-z)\tilde{w}_{k} B_{k}^{n}(z)$. The overdetermined system of
equations~\eqref{wtildestarsystem} is consistent since $\ell(z) =
\sum_{i=0}^{n+1} a_{i} B_{i}^{n+1}(z) = 0$.  Hence,	
\begin{equation}
     w_{k} = -\sum_{i=0}^{k} a_{i}B_{i}^{n+1}(z), \quad k=0,...,n
\end{equation}
and the result follows as claimed.
\end{proof}

Theorem \ref{theoremLagrange} gives a new explicit formula for the solution of
the Bernstein interpolation problem based on the Lagrangian form of the
interpolant. However, direct use of Theorem~\ref{theoremLagrange} for
computation of the actual solution would cost $\mathcal{O}(n^{3})$ operations.

\subsection{A simple algorithm for computing the univariate control points}\label{Newton1D}
Many elementary numerical analysis textbooks extol the virtues of using the
Newton formula \cite{MR0380189} for the polynomial interpolant
satisfying~\eqref{polyfit}:
\begin{equation}\label{Newtonsformula}
   p(x) =  \sum_{j=0}^n f[x_{0},...,x_{j}] w_j(x)
\end{equation}
where $w_{0}(x) =1$ and $w_{j}(x)= \prod_{k=0}^{j-1} (x-x_{k})$, for
$j=1,...,n$, and $f[x_{0},...,x_{j}]$ are the divided differences defined
recursively as follows:
\begin{equation*}
    f[x_{j},..., x_{k}] =  \frac{f[x_{j+1},..., x_{k}]  - f[x_{j},..., x_{k-1}] }{x_{k}-x_{j}},
    \quad k=j+1,...,n \mbox{ and }j = 0,...,n, 
\end{equation*}
whilst if $k = j$, then the value is simply  $f_j$.

Is there any advantage to using the Newton form of the interpolant for
Bernstein interpolation?  

\subsubsection{Newton-Bernstein Algorithm}
Let $p_k\in\mathbb{P}^{k}([0,1])$ be the Newton form of the interpolant at
the nodes $\{x_{j}\}_{j=0}^{k}$; that is
\begin{equation}\label{p_k}
   p_k(x)=\sum_{j=0}^k f[x_{0},...,x_{j}] w_{j}(x),\quad k=0,...,n.
\end{equation}

\begin{theorem}\label{theoremNB}
For $k=0,...,n$ define $\{w^{(k)}_{j}\}_{j=0}^{k}$  and
$\{c^{(k)}_{j}\}_{j=0}^{k}$ by the rules $w_0^{(0)} = 1$, $c_0^{(0)}=f[x_0]$ and
\begin{align*}
w_j^{(k)}& =\frac{ j }{k}w_{j-1}^{(k-1)}(1-x_{k-1}) - \frac{k-j}{k} w_j^{(k-1)} x_{k-1}, \\
c_j^{(k)} &=  \frac{j}{k}c_{j-1}^{(k-1)} + \frac{k-j}{k}c_{j}^{(k-1)} + w_j^{(k)}f[x_{0},...,x_{k}],
\end{align*}
for $j = 0,...,k$, where we use the convention $c_{-1}^{(k-1)}=w_{-1}^{(k-1)} =
0$ and $c_{k}^{(k-1)}=w_{k}^{(k-1)} = 0$.  Then, $\{w^{(k)}_{j}\}_{j=0}^{k}$ are
the control points of $w_k$ and $\{c^{(k)}_{j}\}_{j=0}^{k}$ are the control points of
$p_k$.
\end{theorem}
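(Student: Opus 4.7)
The plan is to prove both assertions simultaneously by induction on $k$, exploiting the fact that the Newton construction builds $w_k$ and $p_k$ from their lower-degree predecessors via two simple operations: multiplication by the linear factor $(x - x_{k-1})$, and degree elevation. Concretely, $w_k(x) = (x - x_{k-1}) w_{k-1}(x)$ and $p_k(x) = p_{k-1}(x) + f[x_0,\dots,x_k] w_k(x)$. The base case $k=0$ is immediate from $w_0 = 1$ and $p_0 = f[x_0]$, which have the single control point each prescribed by the theorem.

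For the inductive step on $w_k$, I would assume $w_{k-1} = \sum_{j=0}^{k-1} w_j^{(k-1)} B_j^{k-1}$ and rewrite the linear factor as $x - x_{k-1} = (1-x_{k-1}) B_1^1(x) - x_{k-1} B_0^1(x)$, exactly as in the proof of Theorem \ref{theoremLagrange}. Applying the product identities \eqref{eq:manuel} term by term gives
\begin{equation*}
w_k(x) = (1-x_{k-1}) \sum_{j=0}^{k-1} w_j^{(k-1)} \tfrac{j+1}{k} B_{j+1}^{k}(x)
       - x_{k-1} \sum_{j=0}^{k-1} w_j^{(k-1)} \tfrac{k-j}{k} B_{j}^{k}(x),
\end{equation*}
and a reindexing of the first sum, together with the boundary conventions $w_{-1}^{(k-1)} = w_k^{(k-1)} = 0$, collects the coefficient of $B_j^k(x)$ into exactly the stated formula for $w_j^{(k)}$.

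For $p_k$, I first degree-elevate the inductive hypothesis $p_{k-1} = \sum_{j=0}^{k-1} c_j^{(k-1)} B_j^{k-1}$ from degree $k-1$ to degree $k$. The standard degree-elevation identity $B_j^{k-1} = \tfrac{j+1}{k} B_{j+1}^k + \tfrac{k-j}{k} B_j^k$ (which itself is a one-line consequence of \eqref{eq:manuel}) yields control points $\tfrac{j}{k} c_{j-1}^{(k-1)} + \tfrac{k-j}{k} c_j^{(k-1)}$ for $p_{k-1}$ in degree $k$. Adding $f[x_0,\dots,x_k] w_k$, whose degree-$k$ control points have just been identified as $w_j^{(k)}$, gives the recurrence for $c_j^{(k)}$ stated in the theorem.

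I do not expect any real obstacle: the whole argument is a pair of Bernstein-algebra manipulations glued together by the Newton recursion, and the only bookkeeping item is keeping the boundary conventions $c_{-1}^{(k-1)} = c_k^{(k-1)} = 0$ consistent with the reindexings above. The proof terminates at $k=n$, at which point $p_n$ coincides with the interpolant \eqref{Bernsteinrepresentation} by uniqueness, so $\{c_j^{(n)}\}_{j=0}^n$ are the desired Bernstein control points.
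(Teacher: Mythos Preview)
Your proposal is correct and follows essentially the same route as the paper: induction on $k$, expressing $(x-x_{k-1})$ in the degree-one Bernstein basis and applying the product identities~\eqref{eq:manuel} to obtain the recurrence for $w_j^{(k)}$, then degree-elevating $p_{k-1}$ and adding $f[x_0,\dots,x_k]\,w_k$ to obtain the recurrence for $c_j^{(k)}$. The only differences are cosmetic (you phrase degree elevation as a consequence of~\eqref{eq:manuel}, whereas the paper quotes it as a separate identity).
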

\begin{proof}
The case $k=0$ is trivially satisfied. We proceed by induction and suppose that
the Bernstein forms of the polynomials, $p_{k-1}$ and $w_{k-1}$ are given by
\begin{equation*}
	w_{k-1}(x) =\sum_{j=0}^{k-1}w_j^{(k-1)} B_j^{k-1}(x) \mbox{ and }
	p_{k-1}(x) =\sum_{j=0}^{k-1}c_j^{(k-1)} B_j^{k-1}(x).
\end{equation*}
Firstly, we derive the  Bernstein coefficients of the $k$-th degree polynomial
$w_k$ as follows:
\begin{align*}
  w_k(x) & =(x-x_k)  \sum_{j=0}^{k-1} w^{(k-1)}_j B^{k-1}_j(x) \\
         & = \left(x(1-x_{k-1})- x_{k-1}(1-x) \right) \sum_{j=0}^{k-1} w^{(k-1)}_j B^{k-1}_j(x).
\end{align*}
Using~\eqref{eq:manuel} with $n=k-1$ we conclude that the Bernstein
coefficients of $w_k$ are given by
\begin{align}\label{BBformwk}
  w_j^{(k)}& =\frac{ j }{k}w_{j-1}^{(k-1)}(1-x_{k-1}) - \frac{k-j}{k} w_j^{(k-1)} x_{k-1}.
\end{align}
Secondly, we use degree raising property
\begin{equation}
B_{k}^{n-1}= \frac{n-k}{n} B_{k}^{n} + \frac{k+1}{n}B_{k+1}^{n},\quad k=0,..., n-1.
\end{equation}
to write the $(k-1)$-th degree polynomial  $p_{k-1}$, in terms of the Bernstein
basis of polynomials of degree $k$.
\begin{equation}\label{BBformpk-1}
p_{k-1}(x)=\sum_{j=0}^k c^{(k-1)}_j B^{k-1}_j(x)  = \sum_{j=0}^{k}\left(  \frac{j}{k}c_{j-1}^{(k-1)} + \frac{k-j}{k}c_{j}^{(k-1)}  \right)B_j^{k}(x)
\end{equation}
where we again use the convention $c_{-1}^{(k-1)} = 0$ and $c_{k}^{(k-1)} = 0$.
Observe from \eqref{p_k} that $p_k(x) = p_{k-1}(x)+w_{k}(x)f[x_{0},...,x_{k}]$.
Hence, by \eqref{BBformpk-1} we have that
 \begin{equation*}
 c_j^{(k)} =  \frac{j}{k}c_{j-1}^{(k-1)} + \frac{k-j}{k}c_{j}^{(k-1)} + w_j^{(k)}f[x_{0},...,x_{k}],\quad j = 0,...,k
\end{equation*}
are the control points of $p_k$.
\end{proof}

Algorithm~\ref{NB1} provides an implementation of the result obtained in
Theorem \ref{theoremNB} in which the simplicity of the procedure is immediately
apparent along with an overall complexity of $\mathcal{O}(n^2)$:
\begin{algorithm}
    \SetKwInOut{Input}{Input}
    \SetKwInOut{Output}{Output}
    \SetKwInOut{Comment}{Comment}
    \Input{Interpolation nodes and data $\{x_{j}\}_{j=0}^{n}$,$\{f_{j}\}_{j=0}^{n}$.}
    \Output{Control points $\{c_{j}\}_{j=0}^{n}$.}
    $c_0\gets f_{0}$\;
    $w_{0} \gets 1$\;
    \For{$k\gets n,s$}
    {   \For{$k\gets n,s$}
        {
            $f_{k} \gets (f_{k}-f_{k-1})/(x_{k}-x_{k-s})$
	    \tcp*{Divided differences}
        }
    	\For{$k\gets s,1$}
        {
            $w_{k}\gets \frac{k}{s}w_{k-1}(1-x_{s-1})-(1-\frac{k}{s})w_{k}x_{s-1}$\;
            $c_{k}\gets \frac{k}{s} c_{k-1}+(1-\frac{k}{s})c_{k}+f_{s}w_{k}$\;
        }
        $w_{0}\gets -w_0 x_{s-1}$\;
        $c_{0}\gets c_{0}+f_{s}w_{0}$\;
    }
    return $\{c_{j}\}_{j=0}^{n}$\;
    \caption{\texttt{NewtonBernstein} ($\{x_{j}\}_{j=0}^{n}$,$\{f_{j}\}_{j=0}^{n}$) }\label{NB1}
\end{algorithm}

Readers who have studied the derivation of the Marco-Martinez
algorithm~\cite{MR2305145}, for solving the same problem may be rather
surprised by the comparative ease with which the Newton-Bernstein algorithm is
derived. The complexity of both algorithms is $\mathcal{O}(n^2)$. In the next
section, we compare the performance of the Newton-Bernstein algorithm and the
more sophisticated Marco-Martinez algorithm.

\subsection{Numerical examples}\label{Numerics1D} In this section we compare
the performance of Algorithm~\ref{NB1} (\texttt{NewtonBernstein}) with
Marco-Martinez algorithm~\cite{MR2305145}  (\texttt{MM}) and, the command
``$\backslash$'' in \texttt{Matlab} for three examples.  Examples \ref{ex1} and
\ref{ex2} are taken directly from~\cite{MR2305145}. The true solutions of all
of these problems are computed using the command \texttt{linsolve} in
\texttt{Maple 18}.  In each case, we present the relative error defined by 
\begin{equation}\label{Relativeerror}
\textrm{Relative error} = \frac{\|c_{\textrm{exact}} - c_{\textrm{approx}}\|_2}{\|c_{\textrm{exact}} \|_2},
\end{equation}
where $c_{\textrm{exact}}$ and $c_{\textrm{approx}}$ denote the exact and
approximate B\'ezier control points.
\begin{example}\label{ex1}\normalfont
Let $n=15$ and choose uniformly distributed nodes given by $x_{i} =
(i+1)/(17)$, $i=0,...,n$.  Let $A$ be the Bernstein-Vandermonde matrix of
degree $n$ generated by a given set of interpolation nodes
$\{x_{j}\}_{j=0}^{n}$, 
\begin{equation}
a_{i,j} =  b_{i}^{n}(x_{j}) =  \binom{n}{i} (1-x_{j})^{n-i} x_{j}^{i}, \quad i,j=0,...,n.
\end{equation}

In this case the condition number of the Bernstein-Vandermonde matrix is
$\kappa(A) = 2.3e+06$. The right hand sides $f_1,\,f_2$ and $f_3$ are given by
\begin{align*}
(f_1)_{j}&=(1-x_j)^n, \\
f_2&= (2, 1, 2, 3, -1, 0, 1, -2, 4, 1, 1, -3, 0, -1, -1, 2)^{\mathtt{T}},\\
f_3&= (1, -2, 1, -1, 3, -1, 2, -1, 4, -1, 2, -1, 1, -3, 1, -4)^{\mathtt{T}}.
\end{align*}
Relative errors for each algorithm are displayed in Table \ref{tab:ex1}.

\begin{table}[htbp]
  \centering
  \begin{tabular}{c  @{\hskip .7in}  c @{\hskip .7in}   c@{\hskip .7in}    c }
    \toprule
    $f_i$&  $A\backslash f_i$  & \texttt{NewtonBernstein}  & \texttt{MM}\\
    \midrule
    $f_1$ &   7.2e-13& 7.9e-14  & 9.2e-13 \\
    $f_2$ & 7.1e-11  & 5.9e-16  & 1.0e-15  \\
    $f_3$ & 7.1e-11  & 5.2e-16  &   4.9e-16\\
 \bottomrule
  \end{tabular}\vskip2mm
  \caption{Example \ref{ex1}: Relative errors in $L^2$ norm.}
  \label{tab:ex1}
\end{table}
The results in Table \ref{tab:ex1} indicate that Newton-Bernstein and
Marco-Martinez algorithm give comparable stability and accuracy, with the 
``$\backslash$'' solver performing poorly.
\end{example}

\begin{example}\label{ex2}\normalfont
Let $n = 15$ and $A$ be the Bernstein-Vandermonde matrix generated by the data
\begin{equation}
x=(\frac{1}{18}, \frac{1}{16}, \frac{1}{14}, \frac{1}{12}, \frac{1}{10}, \frac{1}{8}, \frac{1}{6}, \frac{1}{4}, \frac{11}{20}, \frac{19}{34}, \frac{17}{30}, \frac{15}{26}, \frac{11}{18}, \frac{9}{14}, \frac{7}{10}, \frac{5}{6})^{\mathtt{T}}.
\end{equation}
The condition number of the Bernstein-Vandermonde matrix is $\kappa(A) =
3.5e+09$.  Consider the singular value decomposition of the
Bernstein-Vandermonde matrix $A = U \Sigma V^{\mathtt{T}}$. We will solve the
linear systems $Ac = f_{j}$, with $f_{j} = u_{j},\,j = 0,...,n,$ where $u_{j}$
denotes the $j-th$ column of $U$, the left singular vectors of $A$. Relative
errors for each algorithm are displayed in Table \ref{tab:ex2}.

{\small
\begin{table}[htbp]
  \centering
  \begin{tabular}{l  @{\hskip .5in}  l @{\hskip .5in}  c @{\hskip .5in}   c@{\hskip .5in}    c }
    \toprule
    $f_i$& $\gamma(A,f_i)$ & $A\backslash f_i$  & \texttt{NewtonBernstein}&  \texttt{MM}\\
    \midrule
    $f_1$   & 3.5e+09 & 3.3e-07  & 1.9e-08  & 3.1e-07\\
    $f_2$   & 2.6e+09 & 1.1e-07  & 6.2e-08  & 2.1e-08\\
    $f_3$   & 1.3e+09 & 8.4e-09  & 5.6e-09  & 2.7e-08\\
    $f_4$   & 1.2e+09 & 2.1e-08  & 1.1e-08  & 1.4e-08\\
    $f_5$   & 5.3e+08 & 3.8e-08  & 2.6e-09  & 1.7e-08\\
    $f_6$   & 4.0e+08 & 4.3e-08  & 1.0e-08  & 2.7e-09\\
    $f_7$   & 1.1e+08 & 3.7e-08  & 1.8e-09  & 7.4e-09\\
    $f_8$   & 5.8e+07 & 3.5e-08  & 6.5e-10  & 2.5e-09\\
    $f_9$   & 1.1e+07 & 1.5e-08  & 8.7e-10  & 3.2e-10\\
    $f_{10}$& 3.7e+06 & 1.2e-08  & 1.5e-10  & 3.3e-10\\
    $f_{11}$& 4.8e+05 & 1.4e-08  & 4.5e-12  & 3.6e-12\\
    $f_{12}$& 1.2e+05 & 3.5e-09  & 1.3e-11  & 1.7e-11\\
    $f_{13}$& 6.2e+03 & 8.2e-09  & 3.0e-12  & 2.3e-12\\
    $f_{14}$& 9.3e+02 & 1.1e-08  & 7.6e-13  & 7.8e-13\\
    $f_{15}$& 1.4e+01 & 1.2e-08  & 4.2e-14  & 4.2e-14\\
    $f_{16}$& 1       &  5.0e-09 & 7.1e-15  & 7.9e-15\\
 \bottomrule
  \end{tabular}\vskip2mm
  \caption{Example \ref{ex2}: Relative errors in $L^2$ norm.}
  \label{tab:ex2}
\end{table}
}
This example illustrates the effective well-conditioning introduced by Chan and
Foulser in \cite{MR963849}. In particular, the test confirms that the Newton-Bernstein
algorithm increases in accuracy as the Chan-Foulser number decreases which is the
same (positive) behaviour exhibited by the Marco-Martinez algorithm.  
\end{example}

\begin{example}\label{ex3}\normalfont
Let $n=25$ and take the interpolation nodes to be the zeros of the Chebyshev
polynomial of the first kind. In this case the condition number of the
Bernstein-Vandermonde matrix is $\kappa(A) = 2.1e+07$. The right hand sides are 
taken to be $f_1,\,f_2$ and $f_3$ given by
{\footnotesize
\begin{align*}
(f_{1})_j&=(1-x_j)^n\quad\hbox{for } j =1,...,25, \\
f_2&=(-3, -1, 2, -1, 2, -1, 1, -3, 2, -3, 1, 2, -1, -2, 1, -2, -1, -2, 1, -2, 3, -2, -3, 2, 1, -2)^{\mathtt{T}},\\
f_3 &=(-1, 2, 1, -1, -2, -3, 2, 3, -2, -1, 2, 1, 3, -2, 1, -1, -1, 2, -2, -3, 1, -1, 1, -3, 2, -1)^{\mathtt{T}}.
\end{align*}
}
Relative errors for each algorithm are displayed in Table \ref{tab:ex3}.

\begin{table}[htbp]
  \centering
  \begin{tabular}{c  @{\hskip .4in}  c @{\hskip .4in}   c@{\hskip .2in}    c@{\hskip .4in}    c }
    \toprule
    $f_i$&  $A\backslash f_i$  & \texttt{NewtonBernstein}$_{\mathrm{Leja}}$ & \texttt{NewtonBernstein} & \texttt{MM}\\
    \midrule
    $f_1$ &  7.7e-11& 4.2e-11  &  4.2e-11 &  1.0e-09 \\
    $f_2$ &  1.1e-10 &  3.2e-16 &   7.9e-13& 1.4e-15 \\
    $f_3$ &  1.1e-10  &  4.8e-16 &  1.6e-13  &   1.6e-15\\
 \bottomrule
  \end{tabular}\vskip2mm
  \caption{Example \ref{ex3}: Relative errors in $L^2$ norm.}
  \label{tab:ex3}
\end{table}
It is well-known that there can be advantages in re-ordering the interpolation
nodes. One feature of the Newton-Bernstin Algorithm (not shared by the
Marco-Martinez Algorithm\texttt{MM}) is the flexibility to re-order the
interpolation nodes. Table \ref{tab:ex2} compares the relative errors obtained 
for the interpolation problems using ascending ordering (\texttt{NewtonBernstein}) and
using Leja ordering \texttt{NewtonBernstein}$_{\mathrm{Leja}}$, see
\cite{MR1039671,MR1927606}. In this example the Leja ordering of
the interpolation nodes produces better results, and as in the previous example
the accuracy of our algorithms increases with the alternating sign pattern of
the right hand side. 
\end{example}

\section{Tensor product polynomial interpolation}
In this section we briefly show how the Newton-Bernstein Algorithm~\ref{NB1}
can be used to interpolate on tensor product grids in higher dimensions.
Consider the two dimensional case. Let $\{x_{i}\}_{i=0}^{n}\subseteq [0,1]$ and
$\{y_{j}\}_{j=0}^{m}\subseteq[0,1]$ be given nodes and let
$\{f_{i,j}\}_{i,j=0}^{n,m}$ be given data. The polynomial interpolation problem
consists of finding the polynomial
$p\in\mathbb{P}^{n}([0,1])\times\mathbb{P}^{m}([0,1])$ satisfying the
conditions:
\begin{equation}\label{polyfit2D}
p(x_i,y_j) =  f_{i,j}, \qquad i=0,...,n, j =0,...,m.
\end{equation}
The associated Bernstein interpolation problem consists of finding the control points
$\{c_{k,\ell}\}_{k=0,\ell=0}^{n,m}$ such that the tensor product Bernstein polynomial
\begin{equation}\label{Bernsteinrepresentation2D}
p(x,y)  =  \sum_{\ell = 0}^{m} \sum_{k=0}^n c_{k,\ell} B_k^{n}(x) B_{\ell}^{m}(y),\quad (x,y)\in[0,1]^2
\end{equation}
satisfies \eqref{polyfit2D}.

The tensor product nature of the problem means that we can exploit the
Newton-Bernstein algorithm for the univariate case to solve
problem~\eqref{Bernsteinrepresentation2D}.  The basic idea is to: a) first
construct the control points for the univariate Bernstein interpolant $p^{(j)}$
on the lines $y=y_j$ for each $j$, i.e.  for each $j=0,\ldots,n$:
\begin{equation}\label{pjsol}
   p^{(j)}(x)=\sum_{k=0}^{n}c_{k}^{(j)}B_{k}^{n}(x);\quad
   p^{(j)}(x_{i}) = f_{i,j},\quad i = 0,...,n;
\end{equation}
and, b) solve a univariate interpolation problem for the $y$-variable in which
the data are the univariate polynomials obtained in step a), i.e. find the
univariate polynomial $p$ satisfying
\begin{equation}
p(x,y_{j}) = p^{(j)}(x),\quad j = 0,...,m.
\end{equation}

The problem in step b) is nothing more than a univariate interpolation problem
with the only difference being that the data is now polynomial valued.  The
derivation of the Newton-Bernstein algorithm presented in the previous section
applies equally well to the more general problem of interpolating values from a
vector space $X$.  In particular, choosing $X$ to be polynomials shows that the
Newton-Bernstein Algorithm~\ref{NB1} can be brought to bear at each of stages
a) and b). This idea forms the foundation for Algorithm~\ref{TP2D}.
\begin{algorithm}
    \SetKwInOut{Input}{Input}
    \SetKwInOut{Output}{Output}
    \SetKwInOut{Comment}{Comment}
    \Input{Interpolation nodes and data $\{x_{i}\}_{i=0}^{n}$, $\{y_{j}\}_{j=0}^{m}$, $\{f_{i,j}\}_{i,j=0}^{n,m}$.}
    \Output{Control points $\{c_{j}\}_{j=0}^{n}$.}
    \For{$j\gets 0,m$}
    {
        $\{\tilde{c}^{(j)}_k\}_{k=0}^{n}\gets$ \texttt{NewtonBernstein}$(\{x_{i}\}_{i=0}^{n},\{f_{i,j}\}_{i=0}^{n})$\;
    }
    \For{$i\gets 0,n$}
    {
        $\{c_{i,j}\}_{j=0}^{m} \gets$ \texttt{NewtonBernstein}$(\{y_{j}\}_{j=0}^{m},\{\tilde{c}_{i}^{(j)}\}_{j=0}^{m})$\;
    }
    return $\{c_{i,j}\}_{i,j=0}^{n,m}$\;
    \caption{\texttt{TensorProduct2D} ($\{x_{i}\}_{i=0}^{n}$, $\{y_{j}\}_{j=0}^{m}$, $\{f_{i,j}\}_{i,j=0}^{n,m}$) }\label{TP2D}
\end{algorithm}

The basic idea used in the two dimensional algorithm is easily extended to
higher dimensions resulting in Algorithm~\ref{TP3D} which computes the solution
of the three dimensional version of the interpolation problem
\eqref{Bernsteinrepresentation2D}.
\begin{algorithm}
    \SetKwInOut{Input}{Input}
    \SetKwInOut{Output}{Output}
    \SetKwInOut{Comment}{Comment}
    \Input{Interpolation nodes and data $\{x_{i}\}_{i=0}^{n}$, $\{y_{j}\}_{j=0}^{m}$,$\{z_{k}\}_{k=0}^{l}$, $\{f_{i,j,k}\}_{i,j,k=0}^{n,m,l}$.}
    \Output{Control points $\{c_{i,j,k}\}_{i,j,k=0}^{n,m,l}$.}
    \For{$j\gets 0,m$}
    {
        $\{\tilde{c}_{i,j,k}\}_{i=0}^{n}\gets$\texttt{NewtonBernstein}$(\{x_{i}\}_{i=0}^{n},\{f_{i,j,k}\}_{i=0}^{n})$\;
    }
    \For{$i\gets 0,n$}
    {
        $\{\hat{c}_{i,j,k}\}_{j=0}^{m} \gets$ \texttt{NewtonBernstein}$(\{y_{j}\}_{j=0}^{m},\,\{\tilde{c}_{i,j,k}\}_{j=0}^{m})$\;
    }
    \For{$i\gets 0,n$}
    {
	    $\{c_{i,j,k}\}_{k=0}^{l} \gets$ \texttt{NewtonBernstein}$(\{z_{k}\}_{k=0}^{l},\,\{\hat{c}_{i,j,k}\}_{k=0}^{l})$\;
	}
    return $\{c_{i,j,k}\}_{i,j,k=0}^{n,m,l}$\;
    \caption{\texttt{TensorProduct3D} ($\{x_{i}\}_{i=0}^{n}$, $\{y_{j}\}_{j=0}^{m}$,$\{z_{k}\}_{k=0}^{l}$, $\{f_{i,j,k}\}_{i,j,k=0}^{n,m,l}$) }\label{TP3D}
\end{algorithm}

\subsection{Numerical examples: Tensor product}
In this section we consider examples of the two and three dimensional Bernstein-B\'ezier interpolation problems, illustrating the performance of  Algorithms~\ref{TP2D} and~\ref{TP3D}.
\begin{example}\label{ex4}\normalfont
Consider $n = 15$ and  the two dimensional interpolation problem with grid
induced by the following nodes
\begin{align*}
x_{i} = \frac{i+1}{n+2} ,\quad y_{j} = \frac{j+1}{n+3}\quad i,j = 0,...,n.
\end{align*}
Note that in this case the condition number of the two dimensional Bernstein-\\
Vandermonde matrix $A_{2}$ (size $256\times256$) is $\kappa(A_2) = 1.4e+13$. As
load vectors we consider $f_1$ and  $f_2$ randomly generated, taking integer
values between $-3$ and $3$ for each component.  Relative errors are displayed
in Table \ref{tab:ex4}.

\begin{table}[htbp]
  \centering
  \begin{tabular}{c  @{\hskip .6in}  c @{\hskip .2in} c @{\hskip .3in}  c@{\hskip .5in}  c  @{\hskip .5in}   c   }
    \toprule
   & \multicolumn{1}{l}{} & & \multicolumn{3}{c}{SOLVER2D} \\
    \cmidrule{4-6}
    $f_i$&   $A_2\backslash f_{i}$  & &\texttt{NewtonBernstein}  &$A\backslash f_{i}$  & \texttt{MM} \\
    \midrule
    $f_{1}$ &4.2e-5  & &  2.5e-15& 2.3e-11   &  1.6e-15\\
    $f_{2}$ &2.6e-5  & &  9.7e-16& 3.5e-11   &  2.8e-15\\
 \bottomrule
  \end{tabular}\vskip2mm
  \caption{Relative errors in $L^2$ norm for solutions of Example \ref{ex4} using \texttt{Matlab} ``$\backslash$'' and \texttt{TensorProduct2D}. The results obtained when \texttt{NewtonBernstein} is replaced in \texttt{TensorProduct2D} by the univariate solvers ``$\backslash$'' and \texttt{MM} are also presented.}
  \label{tab:ex4}
\end{table}
\end{example}
Observe the significantly higher accuracy of the Newton-Bernstein algorithm in comparison 
with the results of ``$\backslash$''.

\begin{example}\label{ex5}\normalfont
Consider $n = 10$ and  the three dimensional interpolation problem with grid induced for the following nodes
\begin{align*}
x_{i} = \frac{i+1}{n+2} ,\quad y_{j} = \frac{j+1}{n+3},\quad z_{k} = \frac{k+2}{n+4}\quad i,j = 0,...,n.
\end{align*}

Note that in this case the condition number of the three dimensional  Bernstein-\\ Vandermonde matrix (size $1331\times1331$) is $\kappa(A_3) = 7.6e+13$. As load vectors we consider $f_1$ and  $f_2$  randomly generated, taking integer values between $-3$ and $3$ for each component.  Relative errors are displayed in Table \ref{tab:ex5}.

\begin{table}[htbp]
  \centering
  \begin{tabular}{c  @{\hskip .6in}  c @{\hskip .2in} c @{\hskip .3in}  c@{\hskip .5in}  c  @{\hskip .5in}  c   }
    \toprule
   & \multicolumn{1}{l}{} & & \multicolumn{3}{c}{SOLVER3D} \\
    \cmidrule{4-6}
    $f_i$&   $A_3\backslash f_{i}$ & &   \texttt{NewtonBernstein}&$A\backslash f_{i}$   & \texttt{MM} \\
    \midrule
    $f_1$ & 8.4e-6 & &  6.0e-16&  2.1e-12  &   1.1e-15\\
    $f_2$ & 8.2e-6 & &  5.2e-16&  2.2e-12  &   1.3e-15\\
 \bottomrule
  \end{tabular}\vskip2mm
  \caption{Relative errors in $L^2$ norm for solutions of Example \ref{ex5} using \texttt{Matlab} ``$\backslash$'' and \texttt{TensorProduct3D}. The results obtained when \texttt{NewtonBernstein} is replaced in \texttt{TensorProduct3D} by the univariate solvers ``$\backslash$'' and \texttt{MM} are also presented.}
  \label{tab:ex5}
\end{table}

The three dimensional case also shows a great accuracy of the Newton-Bernstein algorithm in comparison with the \texttt{Matlab} solver, see Table \ref{tab:ex5}.
\end{example}

\section{Control points for polynomial interpolation on a simplex}\label{section simplex}
The computation of the control points of the Lagrange interpolant on a simplex
in $\mathbb{R}^d$, $d\in\mathbb{N}$ is rather more problematic than the tensor
product case. Nevertheless, in this section, we show how the univariate Newton-Bernstein
algorithm can be generalised to solve the problem.

\subsection{Preliminaries}
For $n\in \mathbb{N}$ define the indexing set
$\Idn=\{ \n{\alpha}= (\alpha_{1},\ldots,\alpha_{d+1})\in \mathbb{Z}_*^{d+1} :
|\n{\alpha}|=n\}$ where $|\n{\alpha}|=\sum_{j=1}^{d+1}\alpha_j$,
$\mathbb{Z}_{*} = \{0\}\cup\mathbb{N}$ and $|\Idn| = \mathrm{card}(\Idn) =
\binom{n+d}{d}$.  For $n,m\in\mathbb{N}$, $\n{\alpha}\in\Idn$,
$\n{\beta}\in\mathcal{I}_{d}^m$ and $\n{\lambda}\in\mathbb{R}^{d+1}$, define
\begin{equation*}
\binom{\n{\alpha}}{\n{\beta}} = \prod_{j=1}^{d+1} \binom{\alpha_k}{\beta_k},
\quad\binom{n}{\n{\alpha}}=n! \,\Big(\prod_{j=1}^{d+1}\alpha_{j}!\Big)^{-1},
\quad \n{\lambda}^{\n{\alpha}}=\prod_{j=1}^{d+1} \lambda_{j}^{\alpha_j}.
\end{equation*}

Let $T=conv\{\n{v}_1,...,\n{v}_{d+1}\}$ be a non-degenerate $d$-simplex in $\mathbb{R}^{d}$.
The following property of a non-degenerate simplices will prove useful~\cite{MR3183046}:
\begin{lemma}\label{lemmaBC}
The following conditions are equivalent:
\begin{itemize}
\item[1.] $T$ is a non-degenerate $d$-simplex;
\item[2.] for all $\n{x}\in T$, there exists a unique set of nonnegative
	scalars $\lambda_1,\,\lambda_2,...,\,\lambda_{d+1}$ such that
	$\sum_{\ell=1}^{d+1}\lambda_l\n{v}_\ell=\n{x}$ and $\sum_{\ell=1}^{d+1}
	\lambda_{\ell} =1$.
\end{itemize}
\end{lemma}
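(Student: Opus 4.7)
\medskip

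The plan is to prove the two directions of the equivalence separately. The heart of the statement is that non-degeneracy of the simplex is equivalent to affine independence of the vertices, which in turn is the standard condition for existence and uniqueness of barycentric coordinates.

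For the forward direction $(1)\Rightarrow(2)$, I would first observe that $T$ being a non-degenerate $d$-simplex means precisely that $\n{v}_1,\ldots,\n{v}_{d+1}$ are affinely independent, so the $d$ vectors $\{\n{v}_\ell-\n{v}_1\}_{\ell=2}^{d+1}$ form a basis of $\mathbb{R}^d$. Existence of nonnegative $\lambda_\ell$ summing to $1$ with $\sum_\ell\lambda_\ell\n{v}_\ell=\n{x}$ is immediate from the very definition of $T$ as the convex hull. For uniqueness, I would assume two such representations $\{\lambda_\ell\}$ and $\{\lambda_\ell'\}$ and set $\delta_\ell=\lambda_\ell-\lambda_\ell'$. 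Then $\sum_{\ell=1}^{d+1}\delta_\ell=0$ and $\sum_{\ell=1}^{d+1}\delta_\ell\n{v}_\ell=\n{0}$; using the first relation to eliminate $\delta_1$, the second becomes $\sum_{\ell=2}^{d+1}\delta_\ell(\n{v}_\ell-\n{v}_1)=\n{0}$, so linear independence forces $\delta_\ell=0$ for all $\ell\geq 2$ and hence also $\delta_1=0$.

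For the reverse direction $(2)\Rightarrow(1)$, I would argue by contrapositive. Assume $T$ is degenerate, so $\{\n{v}_\ell-\n{v}_1\}_{\ell=2}^{d+1}$ are linearly dependent; pick scalars $\mu_2,\ldots,\mu_{d+1}$, not all zero, with $\sum_{\ell=2}^{d+1}\mu_\ell(\n{v}_\ell-\n{v}_1)=\n{0}$, and set $\mu_1=-\sum_{\ell=2}^{d+1}\mu_\ell$. Then $\sum_\ell\mu_\ell=0$ and $\sum_\ell\mu_\ell\n{v}_\ell=\n{0}$. To violate uniqueness I would test at the centroid $\n{x}^\star=\frac{1}{d+1}\sum_{\ell=1}^{d+1}\n{v}_\ell\in T$, whose obvious representation $\lambda_\ell=1/(d+1)$ has all components strictly positive. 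Perturbing by $t\mu_\ell$ for sufficiently small $|t|>0$ (say $|t|<1/((d+1)\max_\ell|\mu_\ell|)$) yields a second vector of nonnegative weights summing to $1$ that represents $\n{x}^\star$, contradicting (2).

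The main obstacle, if any, is simply being careful in the reverse direction to produce a point at which the coordinates are strictly positive so that a genuine second representation can be constructed; the centroid does this job cleanly. The forward direction is pure linear algebra with no subtleties beyond the substitution $\delta_1=-\sum_{\ell\geq 2}\delta_\ell$.
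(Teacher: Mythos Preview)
Your argument is correct and is the standard proof of the existence and uniqueness of barycentric coordinates. Note, however, that the paper does not actually give its own proof of this lemma: it is stated without proof and attributed to the reference~\cite{MR3183046}, so there is no in-paper argument to compare against.
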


Let $\mathcal{D}_{d}^{n}$ be the set of domain points of $T$ defined by
$\n{x}_{\n{\alpha}}=\frac{1}{n}\sum_{k=1}^{d+1}\alpha_k\n{v}_k$, $\n{\alpha}\in
\Idn$, where $\n{\lambda}\in \mathbb{R}^{d+1}$ are defined as in
Lemma~\ref{lemmaBC}.  The Bernstein polynomials of degree $n\in \mathbb{Z}_+$
associated with the simplex $T$ are defined by
\begin{equation}\label{}
    B_{\n{\alpha}}^{T,n}(\n{x}) =B_{\n{\alpha}}^n(\n{x})
    = \binom{n}{\n{\alpha}} \n{\lambda}(\n{x})^{\n{\alpha}},\qquad \n{\alpha}\in \Idn
\end{equation}
and satisfy
\begin{equation}\label{dprod}
   B_{\n{\alpha}}^{m} B_{\n{\beta}}^{n}
   = \dfrac{\binom{\n{\alpha}+\n{\beta}}{\n{\alpha}}}{\binom{m+n}{n}}B_{\n{\alpha}+\n{\beta}}^{m+n}
   \qquad\n{\alpha}\in \mathcal{I}^{m}_{d},\, \n{\beta}\in \Idn.
\end{equation}

The Bernstein polynomial interpolation problem on the simplex $T$ reads: given a set of
distinct interpolation nodes $\big\{\n{x}_{j}\big\}_{j=1}^{|\Idn|}$ and
interpolation data  $\big\{f_{j}\big\}_{j=1}^{|\Idn|}$, find control points
$\{c_{\n{\alpha}}\}_{\n{\alpha}\in \Idn}$ such that
\begin{equation}\label{Simplexinterp}
p(\n{x})=\sum_{\n{\alpha}\in \Idn} c_{\n{\alpha}} B_{\n{\alpha}}^{n}(\n{x}):
\quad p(\n{x}_{j})  =  f_{j},\quad \hbox{for } j= 1,...,|\Idn|.
\end{equation}
\subsection{Two dimensional case}
The polynomial interpolation problem in higher dimensions requires that the
interpolation nodes appearing in~\eqref{Simplexinterp} satisfy additional
conditions beyond simply being distinct in order for the interpolation problem
to be well-posed~\cite{MR892779}:
\begin{description}
\item [Solvability Condition (S)]
	\emph{There exist distinct lines $\gamma_0,\gamma_1,\ldots,\gamma_n$
	such that the interpolation nodes can be partitioned into
	(non-overlapping) sets $\cA_{n}$,$\cA_{n-1},\ldots$, $\cA_{0}$ where $\cA_j$
	contains $j+1$ nodes located on $\gamma_j\backslash
(\gamma_{j+1},...,\gamma_n)$. }
\end{description}
\begin{figure}
  \begin{minipage}[c]{0.4\textwidth}
    \includegraphics[width=\textwidth]{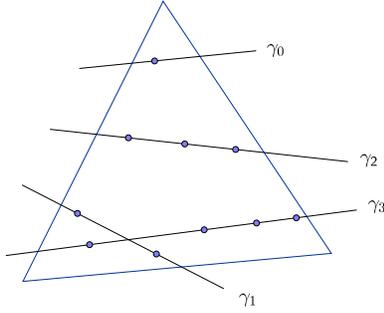}
  \end{minipage}\hfill
  \begin{minipage}[c]{0.6\textwidth}
    \caption{Illustration of configuration of interpolation nodes under
    the Solvability Condition (S)} \label{figureAssumption}
  \end{minipage}
\end{figure}
We will assume that Condition (S) is satisfied throughout. In particular,
Condition (S) implies that there is a line $\gamma_n$ on which $n+1$ distinct
interpolation nodes lie. This means that there exists a (non-unique) polynomial
$q_n\in\mathbb{P}^{n}(T)$ satisfying the \emph{univariate} polynomial
interpolation problem on the line $\gamma_n$:
\begin{equation}
   q_{n}(\n{x}_{i})=f_{i} \quad\forall \n{x}_{i}\in\mathcal{A}_n.
\end{equation}
Equally well, Condition (S) implies that the line $\gamma_{n-1}$ contains $n$
distinct interpolation nodes, none of which lie on $\gamma_n$, meaning that the
univariate polynomial interpolation problem for these nodes is well-posed. The
data for this interpolation problem is chosen as follows. Let $\Gamma_n$ be an
affine polynomial describing the line $\gamma_n$, i.e.  $\n{x}\in\gamma_n$ iff
$\Gamma_{n}(\n{x})=0$. There exists a (non-unique) polynomial
$q_{n-1}\in\mathbb{P}^{n-1}(T)$ satisfying the following \emph{univariate}
polynomial interpolation problem on the line $\gamma_{n-1}$:
\begin{equation}
q_{n-1}(\n{x}_{i})=\frac{f_{i} - q_{n}(\n{x}_{i})\Gamma_{n}(\n{x}_{i})}{\Gamma_{n}(\n{x}_{i})}
\quad \forall  \n{x}_{i}\in\mathcal{A}_{n-1}.
\end{equation}
Observe that this data is well-defined thanks to Condition (S). The
non-uniqueness of the polynomial stems from the fact that while the values of
$q_n$ on the line $\gamma_n$ are uniquely defined, there are many ways to
extend $q_n$ to the simplex---a canonical approach for defining the extension
will be presented in Section~\ref{sectionBB-q}.

The foregoing arguments can be applied repeatedly to define a sequence of
polynomials $q_j\in\mathbb{P}^{j}(T)$ satisfying a univariate interpolation
problem on the line $\gamma_j$:
\begin{equation}\label{defqj}
q_{j}(\n{x}_{i})=\frac{f_{i} - \sum_{k=j+1}q_{k}(\n{x}_{i})
\prod_{l=k+1}^{n}\Gamma_{l}(\n{x}_{i})}{
\prod_{l=j+1}^{n}\Gamma_{l}(\n{x}_{i})} \quad \forall
\n{x}_{i}\in\mathcal{A}_j
\end{equation}
where $\Gamma_k$ is an affine polynomial satisfying $\n{x}\in\gamma_k$ iff
$\Gamma_{k}(\n{x})=0$, $k=0,\ldots,n$.

The following result presents a general construction for the solution of the full
interpolation problem~\eqref{Simplexinterp} in terms of solutions $q_j$ of
univariate interpolation problems on the lines $\gamma_j$:
\begin{theorem}\label{Simplex2Dtheo}
Let $\{q_j\}_{j=0}^n$ be defined as above and define $p\in\mathbb{P}^{n}(T)$ to be
\begin{equation}\label{Newtonform2D}
p(\n{x}) = \sum_{j=0}^{n} q_{j}(\n{x}) \prod_{i=j+1}^{n} \Gamma_{i}(\n{x}), \quad\n{x}\in T.
\end{equation}
Then $p$ solves the interpolation problem \eqref{Simplexinterp},
\end{theorem}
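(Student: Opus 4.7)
The plan is a direct case analysis based on which set $\mathcal{A}_k$ contains a given interpolation node, together with the tautological identity encoded in the definition \eqref{defqj} of the $q_j$. Before checking interpolation, I would quickly observe that $q_j \in \mathbb{P}^{j}(T)$ and $\prod_{i=j+1}^{n}\Gamma_i \in \mathbb{P}^{n-j}(T)$, so each term of \eqref{Newtonform2D} has total degree at most $n$, giving $p \in \mathbb{P}^{n}(T)$ as required.

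Next, fix $k \in \{0,1,\ldots,n\}$ and a node $\n{x}_i \in \mathcal{A}_k$, and evaluate
\begin{equation*}
p(\n{x}_i) = \sum_{j=0}^{n} q_{j}(\n{x}_i) \prod_{l=j+1}^{n} \Gamma_{l}(\n{x}_i).
\end{equation*}
By Condition (S), $\n{x}_i \in \gamma_k$ but $\n{x}_i \notin \gamma_l$ for any $l > k$, so $\Gamma_k(\n{x}_i)=0$ while $\Gamma_l(\n{x}_i)\neq 0$ for all $l>k$. For every $j<k$, the product $\prod_{l=j+1}^{n}\Gamma_{l}(\n{x}_i)$ includes the factor $\Gamma_k(\n{x}_i)=0$, so those terms all vanish. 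This reduces the sum to
\begin{equation*}
p(\n{x}_i) = q_{k}(\n{x}_i) \prod_{l=k+1}^{n} \Gamma_{l}(\n{x}_i) + \sum_{j=k+1}^{n} q_{j}(\n{x}_i) \prod_{l=j+1}^{n} \Gamma_{l}(\n{x}_i).
\end{equation*}

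Now invoke \eqref{defqj} at the index $j=k$ and the node $\n{x}_i\in\mathcal{A}_k$. Since $\prod_{l=k+1}^{n}\Gamma_{l}(\n{x}_i)\neq 0$, that definition can be rearranged into
\begin{equation*}
q_{k}(\n{x}_i)\prod_{l=k+1}^{n}\Gamma_{l}(\n{x}_i) = f_i - \sum_{j=k+1}^{n} q_{j}(\n{x}_i) \prod_{l=j+1}^{n} \Gamma_{l}(\n{x}_i),
\end{equation*}
and substituting this into the previous display collapses everything to $p(\n{x}_i)=f_i$. Since $k$ and $\n{x}_i\in\mathcal{A}_k$ were arbitrary and the sets $\mathcal{A}_0,\ldots,\mathcal{A}_n$ partition the full node set, $p$ interpolates the data at all $|\Idn|$ nodes.

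The argument has no real obstacle beyond bookkeeping: the construction of the $q_j$ is tailored precisely so that, at a node $\n{x}_i\in\mathcal{A}_k$, all terms below index $k$ are killed by the factor $\Gamma_k$, while the telescoping built into \eqref{defqj} recovers $f_i$ from the terms at indices $\geq k$. The only care needed is to confirm that the denominator $\prod_{l=j+1}^{n}\Gamma_{l}(\n{x}_i)$ in \eqref{defqj} is genuinely nonzero for $\n{x}_i\in\mathcal{A}_j$, which is exactly the content of Condition (S).
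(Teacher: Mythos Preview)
Your proof is correct and follows essentially the same approach as the paper: fix a node $\n{x}_i\in\mathcal{A}_k$, observe that $\Gamma_k(\n{x}_i)=0$ kills all terms with index below $k$, and then invoke \eqref{defqj} to identify the remaining sum with $f_i$. Your write-up is, if anything, a bit more explicit about why the denominators in \eqref{defqj} are nonzero and why $p\in\mathbb{P}^n(T)$.
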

\begin{proof}
The polynomial $p$ defined in~\eqref{Newtonform2D} clearly belongs to
$\mathbb{P}^n(T)$.  Let $j\in\{0,..,n\}$ and $\n{x}_{i}\in\mathcal{A}_j$,
$j\in\{0,...,n\}$. Inserting $\n{x}_i$ into formula~\eqref{Newtonform2D} gives
\begin{align*}
p(\n{x}_{i}) &=\sum_{k=0}^{n} q_{k}(\n{x}_{i}) \prod_{l=k+1}^{n} \Gamma_{l}(\n{x}_{i}) =\sum_{k=j}^{n} q_{k}(\n{x}_{i}) \prod_{l=k+1}^{n} \Gamma_{l}(\n{x}_{i}) \\
&= q_{j}(\n{x}_{i}) \prod_{l=j+1}^{n}\Gamma_{l}(\n{x}_{i}) + \sum_{k=j+1}^{n} q_{k}(\n{x}_{i}) \prod_{l=k+1}^{n} \Gamma_{l}(\n{x}_{i}),
\end{align*}
where we have used the fact that $\Gamma_j(\n{x}_i)=0$ due to Condition (S).
The result follows thanks to \eqref{defqj}.
\end{proof}

Theorem~\ref{Simplex2Dtheo} reduces the solution of the multivariate
interpolation problem to the solution of a sequence of univariate interpolation
problems. This feature may be used in conjunction with the univariate
Newton-Bernstein algorithm to construct an algorithm for solving the
multivariate Bernstein interpolation problem~\eqref{Simplexinterp} as follows:

\begin{algorithm}
\SetKwInOut{Input}{Input}
    \SetKwInOut{Output}{Output}
    \SetKwInOut{Comment}{Comment}
    \Input{Interpolation nodes and data $\left\{\n{x}_j, \,f_j\right\}_{j=0}^{|\cI^{n}_{2}|}$ .}
    \Output{Control points $c^{p}$.}
    Initialise $c^{p}$\;
    \For{$j\gets n,1$}
    {
        $G^{j}\gets$ \texttt{BBAffine}$(\mathcal{A}_{j})$\tcp*{BB-form of $\Gamma_{j}$}
        $[\n{z}_1,\n{z}_2,\kappa]\gets \texttt{GcapT}(G^{j})$\tcp*{$\Gamma_j\cap T$}
        $\bar{\mathcal{A}}_j\gets \texttt{Transform1D}(\mathcal{A}_j,\n{z}_1,\n{z}_2)$\tcp*{Map $[\n{z}_1,\n{z}_2]\mapsto [0,1]$}
        $c^{\gamma_{j}}\gets$ \texttt{NewtonBernstein}$(\bar{\mathcal{A}}_{j})$\;
        $c^{q_{j}}\gets$ \texttt{BBExtension}$(c^{\gamma_{j}},\n{z}_1,\n{z}_2,\kappa)$ \tcp*{$q_{j}$ solves \eqref{defqj}}
        $a\gets c^{q_{j}}$\;
        \tcc{Computes BB-form of $q_{j}\prod_{i=j+1}^{n} \Gamma_{i}$}
        \For{$i\gets j+1,n$}
        {
            $a \gets$ \texttt{BBProduct}$(a, G^{i})$\;
        }
        $c^{p} \gets c^{p}+a$\;
        \For{$i\gets 1,|\cI^{j-1}_{2}|$}
        {
        \tcc{Divided differences}
            $f_{i} \gets \Big(f_{i} -$\texttt{DeCasteljau}$(c^{q_{j}},\n{x}_{i} )\Big)/$ \texttt{DeCasteljau}$(G^{j},\n{x}_{i})$\;
        }      
    }
    return $c^{p}$\;
\caption{\texttt{NewtonBernstein2D}($\left\{\n{x}_j, \,f_j\right\}_{j=0}^{|\cI^{n}_{2}|}$)}\label{SS2D}
\end{algorithm}

Algorithm~\ref{SS2D} calls five subroutines.  The subroutine \texttt{DeCasteljau}
refers to the well-known \emph{de Casteljau}
algorithm~\cite{Farin:2001:CSC:501891} for the evaluation of a polynomial
written in Bernstein form at a cost of $\cO(n^{d+1})$ operations in
$d$-dimensions. The four remaining subroutines are the subject of the
following sections. It will transpire that the overall complexity of Algorithm
\ref{SS2D} is $\mathcal{O}(n^{d+1})$, where $d=2$ in the current case.
A numerical example illustrating the performance of the algorithm is presented
in Section \ref{Numericssimplex}.

\begin{algorithm}
    \SetKwInOut{Input}{Input}
    \SetKwInOut{Output}{Output}
    \SetKwInOut{Comment}{Comment}
    \Input{Interpolation nodes and data $\cA_j$ .}
    \Output{BB-form of $\Gamma$, $G$.}
    Compute coefficients $a$, $b$, $c$ of line $ax+by + c= 0$ fitting the interpolation nodes $\mathcal{A}_j$.\;  
    $d = \max\{|c|,|a+c|,|b+c| \}$\;
    $G_{(1,0,0)} = c/d$\;
    $G_{(0,1,0)} = (a+c)/d$\;
    $G_{(0,0,1)} = (b+c)/d$\;
    return $G$\;
\caption{\texttt{BBAffine}( $\cA_j$)}\label{BB-Gamma}
\end{algorithm}

\subsubsection{\texttt{BBExtension}}\label{sectionBB-q}
The subroutine \texttt{BBExtension} extends the domain of a solution of the
univariate interpolation problem~\eqref{defqj} from the line $\gamma_j$ to the
whole simplex $T$. In order to accomplish this task, it is necessary to obtain
points $\n{z}_1$ and $\n{z}_2$ satisfying $\gamma_j\cap T =
conv\{\n{z}_1,\n{z}_2\}$. Roughly speaking, $\n{z}_1$ and $\n{z}_2$ are the
points at which the line $\gamma_j$ intersects the boundary of the simplex.
Without loss of generality, assume that $\gamma_j$ separates the vertices of
$T$ into sets $\{\n{v}_1,\n{v}_2\}$ and $\{\n{v}_3\}$ such that
$\lambda_1(\n{z}_1)>0$,  $\lambda_1(\n{z}_2) = 0$ and $\lambda_2(\n{z}_1)=0$,
$\lambda_{2}(\n{z}_2)>0$, where $\lambda_1$ and $\lambda_2$ are the barycentric
coordinates on the simplex $T$.
\begin{figure}
  \begin{center}
    \includegraphics[width=\textwidth]{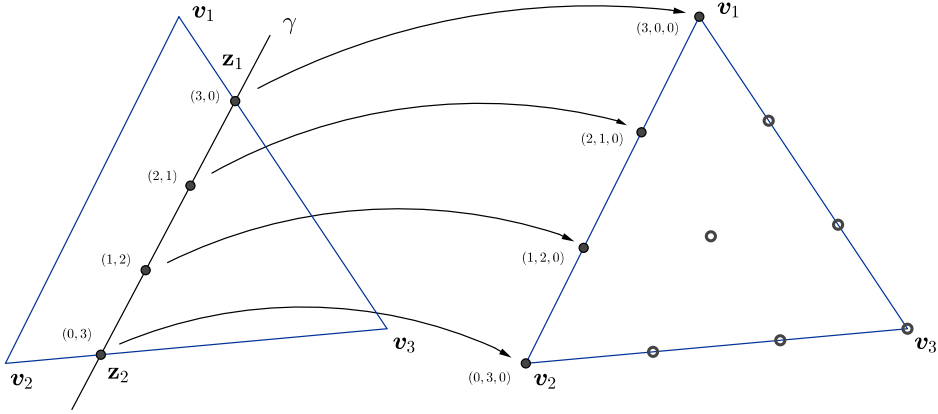}
    \caption{Illustration of labeling for extension procedure and
    correspondence of control points
    $\{c^{\gamma}_{\n{\alpha}}\}_{\n{\alpha}\in\cI_1^j}$ and
    $\{c^{T}_{\n{\alpha}}\}_{\n{\alpha}\in\cI_2^j}$ (filled circles), with
    $j=3$. } \label{figurePyra}
  \end{center}
\end{figure}

In particular, $conv\{\n{z}_1,\n{z}_2\}$ is a non-degenerate 1-simplex and
hence, in view of Lemma~\ref{lemmaBC}, we can define barycentric coordinates on
the 1-simplex $conv\{\n{z}_1,\n{z}_2\}$.  In view of the theory presented in
Section~\ref{Newton1D}, we can find control points
$\{c_{\n{\alpha}}^{\gamma_j}\}$ of the univariate Bernstein polynomial
interpolant of the data $\cA_j$ on $conv\{\n{z}_1,\n{z}_2\}$. These control
points correspond to the Bernstein form of the interpolant relative to the
barycentric coordinates on the 1-simplex $conv\{\n{z}_1,\n{z}_2\}$.  It
therefore only remains to transform these control points to control points for
the barycentric coordinates on the simplex $T$, and thereby implicitly
extending the univariate polynomial to the whole simplex:
\begin{lemma}\label{Lemmaextension}
Let $\{c^{\gamma_j}_{\n{\alpha}}\}_{\n{\alpha}\in\cI^{j}_1}$ be the control
points of a polynomial $q^{\gamma_j}\in
\mathbb{P}^{j}(conv\{\n{z}_1,\n{z}_2\})$ defined on the 1-simplex
$conv\{\n{z}_1,\n{z}_2\}$. Define control points
$\{c^{T}_{\n{\alpha}}\}_{\n{\alpha}\in\cI^{j}_2}$ of a polynomial $q^{T}\in
\mathbb{P}^{j}(T)$ on the simplex $T$ by the rule: for
$(\alpha_1,\alpha_2,\alpha_3)\in\cI^{j}_{2}$
\begin{equation}\label{cT}
  c^{T}_{(\alpha_1,\alpha_2,\alpha_3)}= \left\{
     \begin{array}{ll}
       c^{\gamma_j}_{(\alpha_1,\alpha_2)}(\lambda_1(\n{z}_1))^{-\alpha_1}(\lambda_2(\n{z}_2))^{-\alpha_2}, & \hbox{if } \alpha_3 = 0; \\
       0, & \hbox{otherwise.}
     \end{array}
   \right.
\end{equation}
Then, $q^T\in\mathbb{P}^{j}(T)$ coincides with $q^{\gamma_j}$ on $\gamma_j$.
\end{lemma}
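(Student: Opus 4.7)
The plan is to verify the identity $q^T(\n{x}) = q^{\gamma_j}(\n{x})$ for $\n{x}\in\gamma_j$ by a direct comparison of the two Bernstein expansions, exploiting the relationship between the barycentric coordinates on the $2$-simplex $T$ and those on the $1$-simplex $conv\{\n{z}_1,\n{z}_2\}$.

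First, I would introduce barycentric coordinates $(\mu_1,\mu_2)$ on $conv\{\n{z}_1,\n{z}_2\}$, so that any $\n{x}\in\gamma_j$ can be written as $\n{x}=\mu_1\n{z}_1+\mu_2\n{z}_2$ with $\mu_1+\mu_2=1$. The hypotheses $\lambda_2(\n{z}_1)=0$ and $\lambda_1(\n{z}_2)=0$ (together with $\lambda_1+\lambda_2+\lambda_3=1$) give the decompositions $\n{z}_1=\lambda_1(\n{z}_1)\n{v}_1+\lambda_3(\n{z}_1)\n{v}_3$ and $\n{z}_2=\lambda_2(\n{z}_2)\n{v}_2+\lambda_3(\n{z}_2)\n{v}_3$. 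Substituting and using uniqueness of barycentric coordinates on $T$ (Lemma~\ref{lemmaBC}), I would read off the key identities
\begin{equation*}
\lambda_1(\n{x})=\mu_1\,\lambda_1(\n{z}_1),\qquad \lambda_2(\n{x})=\mu_2\,\lambda_2(\n{z}_2),
\end{equation*}
valid for all $\n{x}\in\gamma_j$. (The formula for $\lambda_3(\n{x})$ is unneeded since the coefficient $c^T_{(\alpha_1,\alpha_2,\alpha_3)}$ vanishes when $\alpha_3>0$.)

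Next, I would expand $q^T(\n{x})=\sum_{\n{\alpha}\in\cI_2^j}c^T_{\n{\alpha}}B_{\n{\alpha}}^j(\n{x})$ using~\eqref{cT}: only the multi-indices with $\alpha_3=0$ survive, so the sum reduces to one over pairs $(\alpha_1,\alpha_2)$ with $\alpha_1+\alpha_2=j$. Substituting the prescription for $c^T_{(\alpha_1,\alpha_2,0)}$ and the identities above, the factors $\lambda_1(\n{z}_1)^{-\alpha_1}\lambda_2(\n{z}_2)^{-\alpha_2}$ cancel exactly against $\lambda_1(\n{x})^{\alpha_1}\lambda_2(\n{x})^{\alpha_2}=\mu_1^{\alpha_1}\mu_2^{\alpha_2}\lambda_1(\n{z}_1)^{\alpha_1}\lambda_2(\n{z}_2)^{\alpha_2}$, while $\binom{j}{(\alpha_1,\alpha_2,0)}=\binom{j}{(\alpha_1,\alpha_2)}$. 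What remains is precisely $\sum_{\alpha_1+\alpha_2=j}c^{\gamma_j}_{(\alpha_1,\alpha_2)}\binom{j}{(\alpha_1,\alpha_2)}\mu_1^{\alpha_1}\mu_2^{\alpha_2}$, the Bernstein expansion of $q^{\gamma_j}$ in the $1$-simplex barycentric coordinates $(\mu_1,\mu_2)$, which equals $q^{\gamma_j}(\n{x})$.

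There is no real obstacle here; the only point requiring a moment's care is the reconciliation of the two barycentric systems, which is a short consequence of the hypotheses on $\n{z}_1,\n{z}_2$ and the uniqueness clause of Lemma~\ref{lemmaBC}. Everything else is algebraic bookkeeping with multinomial coefficients.
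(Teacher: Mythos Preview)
Your proposal is correct and follows essentially the same route as the paper's proof. The only cosmetic difference is that the paper identifies $\lambda_1(\n{x})/\lambda_1(\n{z}_1)$ and $\lambda_2(\n{x})/\lambda_2(\n{z}_2)$ directly with the barycentric coordinates on $conv\{\n{z}_1,\n{z}_2\}$ by checking linearity and the endpoint values $0,1$, whereas you derive the equivalent identities $\lambda_1(\n{x})=\mu_1\lambda_1(\n{z}_1)$, $\lambda_2(\n{x})=\mu_2\lambda_2(\n{z}_2)$ from the affine expansion $\n{x}=\mu_1\n{z}_1+\mu_2\n{z}_2$ and uniqueness of barycentric coordinates.
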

\begin{proof}
Consider the Bernstein form of $q^T$ and apply definition \eqref{cT}
\begin{align}
\nonumber q^{T}(\n{x})&= \sum_{\n{\alpha}\in\cI^{j}_2:\,\alpha_3=0} c^{T}_{\n{\alpha}} B_{\n{\alpha}}^{T,j}(\n{x})\\
                &=\sum_{\n{\alpha}\in\cI^{j}_2, \alpha_3=0} c^{\gamma_j}_{(\alpha_1,\alpha_2)} \binom{j}{(\alpha_1,\alpha_2)} \left(\frac{\lambda_1(\n{x})}{\lambda_1(\n{z}_1)}\right)^{\alpha_1} \left(\frac{\lambda_2(\n{x})}{\lambda_2(\n{z}_2)}\right)^{\alpha_2},\quad \n{x}\in T. \label{factor}
\end{align}
Observe the factor $\frac{\lambda_1(\n{x})}{\lambda_1(\n{z}_1)}$ is linear, and
takes values $1$ at $\n{x}=\n{z}_1$ and $0$ at $\n{x}=\n{z}_2$, and as such
corresponds to the barycentric coordinates on $conv\{\n{z}_1,\n{z}_2\}$.  The
same consideration apply to the other factor in \eqref{factor}. Hence, if
$\n{x}\in \gamma_j$, then
\begin{equation*}
q^{T}(\n{x})= \sum_{\n{\alpha}\in\cI^{j}_1} c^{\gamma_j}_{\n{\alpha}} B_{\n{\alpha}}^{\gamma_j,j}(\n{x})=q^{\gamma_j}(\n{x}),
\end{equation*}
and the result follows.
\end{proof}

The algorithm for~\texttt{BBExtension} corresponding to Lemma~\ref{Lemmaextension} is
presented in Algorithm~\ref{Extensionqj}.
\begin{algorithm}
    \SetKwInOut{Input}{Input}
    \SetKwInOut{Output}{Output}
    \SetKwInOut{Comment}{Comment}
    \Input{$c^{\gamma_j}, \n{z}_1,\n{z}_2, \kappa$.}
    \Output{BB-form of polynomial $q^T$, $c^{T}_{\n{\alpha}}$.}
    \For{$\n{\alpha}\in\cI_2^{j}$}
    {
        \If{$\alpha_{\kappa} = 0$}
        {
            \If{$\kappa=3$}
            {
                $c^{T}_{\n{\alpha}}\gets c^{\gamma_j}_{(\alpha_{1},\alpha_{2})} \lambda_1(\n{z}_1)^{-\alpha_{1}}  \lambda_{2}(\n{z}_2)^{-\alpha_{2}}$\;
            }
            \If{$\kappa=2$}
            {
                $c^{T}_{\n{\alpha}}\gets c^{\gamma_j}_{(\alpha_{3},\alpha_{1})} \lambda_3(\n{z}_1)^{-\alpha_{3}}  \lambda_1(\n{z}_1)^{-\alpha_{1}}$\;
            }
            \If{$\kappa=1$}
            {
                $c^{T}_{\n{\alpha}}\gets c^{\gamma_j}_{(\alpha_{2},\alpha_{3})} \lambda_2(\n{z}_1)^{-\alpha_{2}}  \lambda_3(\n{z}_1)^{-\alpha_{3}}$\;
            }
        }
    }
    return $c^{T}_{\n{\alpha}}$\;
\caption{\texttt{BBExtension}( $\mathcal{A}_j$)}\label{Extensionqj}
\end{algorithm}

\subsubsection{\texttt{GcapT}}
The practical implementation of Lemma~\ref{Lemmaextension} requires the
identification of $\n{z}_1$ and $\n{z}_2$ satisfying $\gamma_j\cap
T=conv(\n{z}_1,\n{z}_2)$. We observe that $\gamma_j$ separates the vertices of $T$ in one of the following:
$\{\n{v}_1,\,\n{v}_2\}\cup\{\n{v}_3\}$; $\{\n{v}_2,\,\n{v}_3\}\cup\{\n{v}_1\}$
and $\{\n{v}_3,\,\n{v}_1\}\cup\{\n{v}_2\}$. Suppose that the isolated node is
given by $\n{v}_k$, $k\in\{1,2,3\}$. Let $\n{z}$ denote either $\n{z}_1$ or
$\n{z}_2$, and let the barycentric coordinates of $\n{z}$ be $\lambda_1$,
$\lambda_2$ and $\lambda_3$, then
\begin{equation}\label{eq:sysk}
\left.
\begin{array}{rcl}
	\lambda_k &=& 0                     \\
	\sum_{i=1}^3 G_{e_i}\lambda_i& = &0 \\
	\sum_{i=1}^3 \lambda_i &= &1        \\
\end{array}
\right\} \quad\mbox{subject to}\quad 0\leq\lambda_i\leq 1, \quad \mbox{for } i=1,2,3
\end{equation}
where $\n{e}_k\in\cI_2^1$ is the multi-index $(\n{e}_k)_i=\delta_{k,i}$ for
$i=1,2,3$.  The first condition holds because $\n{z}$ is on the edge opposite
to vertex $\n{v}_k$, the second because $\Gamma_j(\n{z})=0$ and the third by
the property of barycentric coordinates. Since there exist only \emph{two} such
points $\n{z}$, we know that the system~\eqref{eq:sysk} is solvable for
precisely \emph{two} of the cases $k\in\{1,2,3\}$. These observations provide
the simple approach to the identification of $\n{z}_1$ and $\n{z}_2$
implemented in Algorithm \ref{Computez1z2}.
\begin{algorithm}
    \SetKwInOut{Input}{Input}
    \SetKwInOut{Output}{Output}
    \SetKwInOut{Comment}{Comment}
    \Input{Control points $G$ of $\Gamma$.}
    \Output{Computation of intersection points $\n{z}_1,\,\n{z}_2$ and index of isolated vertex $\kappa$.}
    \For{$k=1,2,3$}
    {
	\eIf{\eqref{eq:sysk} is solvable for $k$} {
            $\n{z}^{k}_{temp} = \sum_{i=1}^3 \n{v}_i\lambda_i$\;
        }{
            $\kappa = k$\;
        }
    }
    $\kappa_1 = \kappa+1 \mod{3}$\;
    $\kappa_2 = \kappa+2 \mod{3}$\;
    $\n{z}_1 \gets \n{z}_{temp}^{\kappa_1}$\;
    $\n{z}_2 \gets \n{z}_{temp}^{\kappa_2}$\;
    return $\n{z}_1,\,\n{z}_2,\,\kappa$\;
\caption{\texttt{GcapT}(G)}\label{Computez1z2}
\end{algorithm}

\subsubsection{\texttt{Transform1D}} In order to use the one dimensional Newton-Bernstein algorithm we need to transform the two dimensional nodes lying on $\gamma_j$ to one dimensional. Subroutine \texttt{Transform1D} transforms the nodes $\{\n{x}_j\}$ on the segment $[\n{z}_1,\n{z}_2]$ to the nodes $\{x_i\}$ in $[0,1]$. This is implemented in Algorithm \ref{Transform1D}.
\begin{algorithm}
    \SetKwInOut{Input}{Input}
    \SetKwInOut{Output}{Output}
    \SetKwInOut{Comment}{Comment}
    \Input{Two dimensional interpolation nodes and data $(\n{x}_i,f_i)_{i=1}^{j+1}=\mathcal{A}_j$.}
    \Output{One dimensional interpolation data $\bar{\mathcal{A}}_j$.}
    \For{$k=1,j+1$}
    {
        $x_k \gets \| \n{x}_k-\n{z}_1 \|_{2} /  \| \n{z}_2-\n{z}_1 \|_{2}$\;
    }
	$\bar{\mathcal{A}}_j\gets (x_i,f_i)_{i=1}^{j+1}$\;
    return $\bar{\mathcal{A}}_j$\;
\caption{\texttt{Transform1D}($\mathcal{A}_j$)}\label{Transform1D}
\end{algorithm}

\subsubsection{\texttt{BBProduct} - Bernstein form of the product}\label{sectionBB-prod}
It remains to define a procedure for computing the BB-form of the product  $q_j
\Gamma$ in terms of the BB-form of $q_j\in \mathbb{P}^{j}(T)$ and $\Gamma\in
\mathbb{P}^{1}(T)$. This can be accomplished by means of formula \eqref{dprod}.
Let $\{c_{\n{\alpha}}\}_{\n{\alpha}\in \cI^j_2}$
and$\{G_{\n{\alpha}}\}_{\n{\alpha}\in \cI^1_2}$ be the control points of $q_j$ and
$\Gamma$, respectively. Then the control points of the product are given by
\begin{align}
 a_{\n{\alpha}}&=  \sum_{k=1}^{3}c_{\n{\alpha}-\n{e}_k} G_{\n{e}_k}
	\frac{\alpha_k}{ j+1},\quad \n{\alpha}\in \cI^{j+1}_2
	\label{prodqjGamma}
\end{align}
where terms involving negative multi-indices are treated as being zero. This
expression forms the basis for Algorithm~\ref{BB-prod} which computes the desired
product for the general $d$-dimensional case.
\begin{algorithm}
    \SetKwInOut{Input}{Input}
    \SetKwInOut{Output}{Output}
    \SetKwInOut{Comment}{Comment}
    \Input{BB-form of $q\in\mathbb{P}^j(T)$, $\{c_{\n{\alpha}}\}_{\n{\alpha}\in \cI^j_d}$ and  \\ BB-form of $\Gamma\in \mathbb{P}^{1}(T)$, $\{G_{\n{\alpha}}\}_{\n{\alpha}\in \cI^1_d}$.}
    \Output{BB-form of the product $q_j \Gamma$, $\{a_{\n{\alpha}}\}_{\n{\alpha}\in \cI^{j+1}_d}$.}
    \For{$\n{\alpha}\in \cI^{j+1}_{d}$}
    {
        \For{$k\gets1,d+1$}
        {
            \If{$\alpha_k > 0$}
            {
                $a_{\n{\alpha}}\gets a_{\n{\alpha}} +c_{\n{\alpha}-\n{e}_k}G_{\n{e}_k}\frac{\alpha_k}{j+1}$\;
            }
        }
    }
    return $\{a_{\n{\alpha}}\}_{\n{\alpha}\in \cI^{j+1}_d}$\;
\caption{\texttt{BBProduct}($\{c_{\n{\alpha}}\}_{\n{\alpha}\in \cI^j_d},\,\{G_{\n{\alpha}}\}_{\n{\alpha}\in \cI^1_d}$)}\label{BB-prod}
\end{algorithm}


\subsection{Numerical example:  2D simplex}\label{Numericssimplex}
In this section we consider the two dimensional Bernstein-B\'ezier interpolation problem over a simplex with non-trivial interpolation data. We compare performance of Algorithm \ref{SS2D} with command ``$\backslash$'' in \texttt{Matlab}.
\begin{example}\label{ex6}\normalfont
We consider $n=10$ and a distribution of points satisfying the Solvability Condition (S). We plot the interpolation points in Figure \ref{ex6fig}. Similarly to Example \ref{ex1} we consider as a load vector $f_1$ and $f_2$ with the alternating sign property. We present and compare our results in Table \ref{tab:ex6}.

\begin{figure}
  \begin{minipage}[c]{0.55\textwidth}
    \includegraphics[width=\textwidth]{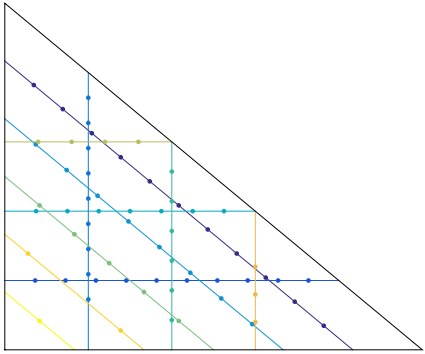}
  \end{minipage}\hfill
  \begin{minipage}[c]{0.45\textwidth}
    \caption{Example \ref{ex6}. Distribution of  interpolation points.} \label{ex6fig}
  \end{minipage}
\end{figure}

\begin{table}[htbp]
  \centering
  \begin{tabular}{c  @{\hskip .7in}  c @{\hskip .7in}   c}
    \toprule
    $f_i$&  $A\backslash f_i$  & \texttt{S2D}-\texttt{NewtonBernstein}\\
    \midrule
    $f_1$ &  4.9e-11    &  4.9e-13 \\
    $f_2$ &  3.1e-11    &  3.3e-13 \\
 \bottomrule
  \end{tabular}\vskip2mm
  \caption{Example \ref{ex6}: Relative errors in $L^2$ norm.}
  \label{tab:ex6}
\end{table}

We observe in Table \ref{tab:ex6} the superior accuracy of Algorithm \ref{SS2D} with respect to the \texttt{Matlab} solver. 
\end{example}

\subsection{Generalisation to $d$-dimensions}

The extension of the procedure described in Algorithm~\ref{SS2D} to the general
case is not difficult. Firstly, the Solvability Condition (S) is generalised to
$d$-dimensions recursively through requiring the existence of a non-overlapping
partition of the interpolation data into sets $\cA_n,\cA_{n-1},...,\cA_0$ of
nodes of appropriate dimension on a sub-simplex augmented by the solvability
condition on each of the $(d-1)$-subsimplices for the interpolation problems
associated with the sets $\cA_j$, $j=n,n-1,...,0$.  Formula
\eqref{Newtonform2D} can likewise be extended to higher dimensions with the
functions $\Gamma_j$ replaced by affine functions vanishing on the appropriate
sub-simplex. Thirdly, by utilising Algorithm \ref{SS2D} to solve the
$(d-1)$-dimensional sub-interpolation problems along with an obvious extension
of Lemma~\ref{Lemmaextension}, we obtain a subroutine equivalent to
\texttt{BBExtension} which extends the polynomial on the $(d-1)$-simplex to the
$d$-simplex.  Finally, a subroutine extending \texttt{BBAffine} to
$d$-dimension is easily obtained and subroutine \texttt{BBProduct} is written in
terms of $d$-dimensions.

\nocite{*}
\bibliography{Bibliography}{}

\begin{thebibliography}{10}

\bibitem{MR3183046}
Mark Ainsworth.
\newblock Pyramid algorithms for {B}ernstein-{B}{\'e}zier finite elements of
  high, nonuniform order in any dimension.
\newblock {\em SIAM J. Sci. Comput.}, 36(2):A543--A569, 2014.

\bibitem{MR2862005}
Mark Ainsworth, Gaelle Andriamaro, and Oleg Davydov.
\newblock Bernstein-{B}{\'e}zier finite elements of arbitrary order and optimal
  assembly procedures.
\newblock {\em SIAM J. Sci. Comput.}, 33(6):3087--3109, 2011.

\bibitem{MR3296172}
Mark Ainsworth, Gaelle Andriamaro, and Oleg Davydov.
\newblock A {B}ernstein-{B}{\'e}zier basis for arbitrary order
  {R}aviart-{T}homas finite elements.
\newblock {\em Constr. Approx.}, 41(1):1--22, 2015.

\bibitem{MR2517938}
Douglas~N. Arnold, Richard~S. Falk, and Ragnar Winther.
\newblock Geometric decompositions and local bases for spaces of finite element
  differential forms.
\newblock {\em Comput. Methods Appl. Mech. Engrg.}, 198(21-26):1660--1672,
  2009.

\bibitem{MR1667643}
J.~Baglama, D.~Calvetti, and L.~Reichel.
\newblock Fast {L}eja points.
\newblock {\em Electron. Trans. Numer. Anal.}, 7:124--140, 1998.
\newblock Large scale eigenvalue problems (Argonne, IL, 1997).

\bibitem{MR2115059}
Jean-Paul Berrut and Lloyd~N. Trefethen.
\newblock Barycentric {L}agrange interpolation.
\newblock {\em SIAM Rev.}, 46(3):501--517 (electronic), 2004.

\bibitem{MR963849}
Tony~F. Chan and David~E. Foulser.
\newblock Effectively well-conditioned linear systems.
\newblock {\em SIAM J. Sci. Statist. Comput.}, 9(6):963--969, 1988.

\bibitem{MR892779}
Charles~K. Chui and Hang-Chin Lai.
\newblock Vandermonde determinant and {L}agrange interpolation in {${\bf
  R}^s$}.
\newblock In {\em Nonlinear and convex analysis ({S}anta {B}arbara, {C}alif.,
  1985)}, volume 107 of {\em Lecture Notes in Pure and Appl. Math.}, pages
  23--35. Dekker, New York, 1987.

\bibitem{MR0380189}
Philip~J. Davis.
\newblock {\em Interpolation and approximation}.
\newblock Dover Publications, Inc., New York, 1975.
\newblock Republication, with minor corrections, of the 1963 original, with a
  new preface and bibliography.

\bibitem{Farin:2001:CSC:501891}
Gerald Farin.
\newblock {\em Curves and Surfaces for CAGD: A Practical Guide}.
\newblock Morgan Kaufmann Publishers Inc., San Francisco, CA, USA, 5th edition,
  2002.

\bibitem{MR1774215}
Rida~T. Farouki.
\newblock Legendre-{B}ernstein basis transformations.
\newblock {\em J. Comput. Appl. Math.}, 119(1-2):145--160, 2000.
\newblock Dedicated to Professor Larry L. Schumaker on the occasion of his 60th
  birthday.

\bibitem{MR2921860}
Rida~T. Farouki.
\newblock The {B}ernstein polynomial basis: a centennial retrospective.
\newblock {\em Comput. Aided Geom. Design}, 29(6):379--419, 2012.

\bibitem{MR1149743}
M.~Gasca and J.~M. Pe{\~n}a.
\newblock Total positivity and {N}eville elimination.
\newblock {\em Linear Algebra Appl.}, 165:25--44, 1992.

\bibitem{MR1927606}
Nicholas~J. Higham.
\newblock {\em Accuracy and stability of numerical algorithms}.
\newblock Society for Industrial and Applied Mathematics (SIAM), Philadelphia,
  PA, second edition, 2002.

\bibitem{MR2776913}
Robert~C. Kirby.
\newblock Fast simplicial finite element algorithms using {B}ernstein
  polynomials.
\newblock {\em Numer. Math.}, 117(4):631--652, 2011.

\bibitem{MR2917163}
Robert~C. Kirby and Kieu~Tri Thinh.
\newblock Fast simplicial quadrature-based finite element operators using
  {B}ernstein polynomials.
\newblock {\em Numer. Math.}, 121(2):261--279, 2012.

\bibitem{MR2355272}
Ming-Jun Lai and Larry~L. Schumaker.
\newblock {\em Spline functions on triangulations}, volume 110 of {\em
  Encyclopedia of Mathematics and its Applications}.
\newblock Cambridge University Press, Cambridge, 2007.

\bibitem{MR2305145}
Ana Marco and Jos{\'e}-Javier Mart{\'{\i}}nez.
\newblock A fast and accurate algorithm for solving {B}ernstein-{V}andermonde
  linear systems.
\newblock {\em Linear Algebra Appl.}, 422(2-3):616--628, 2007.

\bibitem{MR2200737}
Peter~J. Olver.
\newblock On multivariate interpolation.
\newblock {\em Stud. Appl. Math.}, 116(2):201--240, 2006.

\bibitem{MR1039671}
Lothar Reichel.
\newblock Newton interpolation at {L}eja points.
\newblock {\em BIT}, 30(2):332--346, 1990.

\end{thebibliography}
\bibliographystyle{plain}

\end{document}